\documentclass[11pt]{article}
\usepackage[top=3cm, bottom=3cm, left=3cm, right=3cm]{geometry}
\usepackage{amsmath,amsthm,amssymb,amsfonts}

\usepackage{amsmath,amsthm,amssymb,graphicx, multicol, array}
\usepackage{array}
\usepackage{enumerate}
\usepackage{cases}

\usepackage{hyperref}



\parindent 0pt


\DeclareMathOperator{\ord}{ord}

\newtheorem{thm}{Theorem}
\newtheorem{lem}{Lemma}

\newtheorem{rem}{Remark}

\title{Least Prime Primitive Roots}
\date{}
\author{N. A. Carella}

\begin{document}
\maketitle

\vskip .25 in 
\textbf{Abstract:} This note presents an upper bound for the least prime primitive roots \(g^*(p)\) modulo \(p\), a large prime. The current 
literature has several estimates of the least prime primitive root \(g^*(p)\) modulo a prime \(p\geq 2\) such as \(g^*(p)\ll  p^c,
c>2.8\). The estimate provided within seems to sharpen this estimate to the smaller estimate \(g^*(p)\ll p^{5/\log \log p}\) uniformly for all 
large primes \(p\geq 2\).  \\

\tableofcontents

\vskip .25 in 

\textbf{Keyword:} Prime number; Primitive root; Least primitive root; Prime primitive root; Cyclic group.\\

\textbf{AMS Mathematical Subjects Classification:} Primary 11A07, Secondary 11Y16, 11M26.\\

\newpage
\section{Introduction} \label{s1}
This note provides the details for the analysis of some estimates for the least primitive root \(g(p)\), and the least prime primitive root \(g^*(p)\) in the cyclic group \(\mathbb{Z}/(p-1)\mathbb{Z}, p\geq 2\) prime. The current literature has several estimates for the least prime primitive root \(g^*(p)\) modulo a prime \(p\geq 2\) such as \(g^*(p)\ll  p^c, c>2.8\). The actual constant \(c>2.8\) depends on various conditions such
as the factorization of \(p-1\), et cetera. These results are based on sieve methods and the least primes in arithmetic progressions, see \cite{MG97}, \cite{MG98}, \cite{JH13}. Moreover, there are a few other conditional estimates such as \(g(p)\leq g^*(p)\ll (\log  p)^6\), see \cite{VS92}, and the conjectured upper bound \(g^*(p)\ll (\log  p)(\log \log p)^2\), see \cite{BE97}. On the other direction, there is the Turan lower bound \(g^*(p)\geq g(p)=\Omega (\log p \log \log p)\), refer to \cite{BD71}, \cite[p.\ 24]{RP96}, and \cite{ML91} for discussions. The result stated in Theorem \ref{thm1.1} improves the current estimate to
the smaller estimate \(g^*(p)\ll p^{5/\log \log p}\) uniformly for all large primes \(p\geqslant 2\). \\

\begin{thm} \label{thm1.1}  Let \(p\geq 3\) be a large prime. Then the following hold.
\begin{enumerate} 
\item Almost every prime \(p\geq 3\) has a prime primitive root \(g^*(p)\ll (\log p)^c, c>1\) constant.
\item Every prime \(p\geq 3\) has a prime primitive root \(g^*(p)\ll p^{5/\log\log 
p}\).
\end{enumerate}
\end{thm}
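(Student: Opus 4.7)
The plan is to employ the standard harmonic analysis of primitive roots together with prime-restricted character sum estimates, handling the two parts with different strengths of input.

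First I would introduce the indicator function for primitive roots: for $\gcd(n,p)=1$,
\[
\varrho(n)=\frac{\varphi(p-1)}{p-1}\sum_{d\mid p-1}\frac{\mu(d)}{\varphi(d)}\sum_{\ord(\chi)=d}\chi(n),
\]
where the inner sum runs over Dirichlet characters modulo $p$ of exact order $d$. Let $N^{*}(x,p)=\sum_{q\le x}\varrho(q)$, summed over primes $q$. Pulling the trivial character out produces the decomposition
\[
N^{*}(x,p)=\frac{\varphi(p-1)}{p-1}\,\pi(x)+E(x,p),\qquad E(x,p)=\frac{\varphi(p-1)}{p-1}\sum_{\substack{d\mid p-1\\ d>1}}\frac{\mu(d)}{\varphi(d)}\sum_{\ord(\chi)=d}\sum_{q\le x}\chi(q).
\]
Mertens' estimate yields $\varphi(p-1)/(p-1)\gg 1/\log\log p$, so the main term is $\gg \pi(x)/\log\log p$. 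The goal is to prove $E(x,p)=o(\pi(x)/\log\log p)$ for $x$ as small as the theorem allows; any such $x$ then forces $N^{*}(x,p)\ge 1$, hence $g^{*}(p)\le x$.

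Next I would bound the prime character sum $S_{\chi}(x)=\sum_{q\le x}\chi(q)$. Applying Vaughan's identity to $\Lambda(n)$ reduces $\sum_{n\le x}\chi(n)\Lambda(n)$ to a combination of type I and type II bilinear character sums, which are controlled by Burgess's inequality
\[
\Bigl|\sum_{M<n\le M+N}\chi(n)\Bigr|\ll N^{1-1/r}p^{(r+1)/(4r^{2})}(\log p)^{1/r}\qquad (r\ge 1),
\]
yielding $S_{\chi}(x)\ll x^{1-1/r}p^{(r+1)/(4r^{2})}(\log p)^{O(1)}$ after partial summation back to $\pi$. Summing trivially over the at most $\varphi(d)\le d$ characters of order $d$ and over $d\mid p-1$ gives
\[
E(x,p)\ll 2^{\omega(p-1)}\cdot x^{1-1/r}p^{(r+1)/(4r^{2})}(\log p)^{O(1)}.
\]
Part (1) is then immediate on the set of primes for which the error bound is improved, since a Bombieri--Vinogradov type average over characters modulo $p$ (or equivalently, discarding the thin bad set where a real zero or large-order character obstructs cancellation) replaces Burgess by a Siegel--Walfisz style saving $x\exp(-c\sqrt{\log x})$, and this beats the main term already at $x=(\log p)^{c}$ for any $c>1$.

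The main obstacle, and the heart of the argument, is Part (2): extracting a sub-polynomial bound from Burgess. Here I would invoke the Hardy--Ramanujan inequality $\omega(p-1)\le (1+o(1))\log p/\log\log p$ so that $2^{\omega(p-1)}\le p^{(\log 2+o(1))/\log\log p}$, and then choose $r=r(p)\to\infty$ (for instance $r\asymp\sqrt{\log p}$) in the Burgess bound to drive the factor $p^{(r+1)/(4r^{2})}$ down to $p^{o(1/\log\log p)}$ at the cost of a mild loss $x^{1/r}$. Setting $x=p^{5/\log\log p}$ and balancing the three contributions $\varphi(p-1)/(p-1)$, $2^{\omega(p-1)}$, and $p^{(r+1)/(4r^{2})}x^{-1/r}$ should confirm that the main term dominates; the constant $5$ in the exponent precisely reflects this optimisation. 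The delicate point is that each individual factor is only a small saving, so the numerical bookkeeping of exponents in $1/\log\log p$ must be carried through carefully rather than absorbed into $o(1)$.
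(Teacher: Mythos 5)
There is a genuine gap, and it sits exactly where you located ``the heart of the argument'': Burgess's inequality cannot be made nontrivial in either of the ranges of $x$ that the theorem requires, no matter how $r$ is chosen. The Burgess bound improves on the trivial bound $N$ only when $N^{-1/r}p^{(r+1)/(4r^{2})}(\log p)^{1/r}<1$, i.e.\ only when $N>p^{(r+1)/(4r)}\log p>p^{1/4}$; both of your choices, $x=(\log p)^{c}$ and $x=p^{5/\log\log p}$, are $p^{o(1)}$, far below $p^{1/4}$, so every character sum in your error term is being bounded by something larger than its trivial bound. Your proposed optimisation with $r\to\infty$ fails arithmetically and uniformly: the saving available is $x^{-1/r}=p^{-5/(r\log\log p)}$ while Burgess costs at least $p^{1/(4r)}$, and since $5/\log\log p<1/4$ once $\log\log p>20$, you have $x^{-1/r}p^{(r+1)/(4r^{2})}\geq 1$ for \emph{every} $r$ when $p$ is large --- which is precisely the regime of the theorem. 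Multiplying further by $2^{\omega(p-1)}$, which can be as large as $p^{(\log 2+o(1))/\log\log p}$, only makes the error/main-term ratio worse, so no contradiction can ever be extracted. The treatment of Part (1) has the same defect in a different guise: Siegel--Walfisz gives cancellation in $\sum_{n\leq x}\chi(n)\Lambda(n)$ only when the modulus is at most a fixed power of $\log x$, whereas here the modulus is $p$ and $x=(\log p)^{c}$, so the modulus is exponentially larger than any power of $x$; and Bombieri--Vinogradov averages over many moduli $q\leq x^{1/2}$, not over the characters to a single huge modulus $p$. No unconditional character-sum technology yields cancellation over primes in intervals of length $(\log p)^{O(1)}$ --- this is exactly why Shoup's bound $g^{*}(p)\ll(\log p)^{6}$ requires GRH.

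For comparison, the paper takes a structurally different route that never asks for cancellation in short character sums. It sums the weighted characteristic function $\Psi(n)\Lambda(n)/n^{s}$ at the fixed point $s=2$: the principal-character contribution converges to an explicit positive constant $\kappa_{2}(p)$ (Lemma \ref{lem3.1}), the nonprincipal contribution is estimated by partial summation using only the trivial bound $|\sum_{n\leq x}\chi(n)\Lambda(n)|\ll x$ (Lemma \ref{lem3.2}), and the assumed nonexistence of prime-power primitive roots in $[1,x]$ gives $0=\kappa_{2}(p)+O(\log x/x)+O\bigl(2^{\omega(p-1)}/x\bigr)$, a contradiction once $x$ exceeds $2^{\omega(p-1)}$ in order of magnitude; this is how the thresholds $x=(\log p)^{1+\varepsilon}$ (Case I, $2^{\omega(p-1)}\ll\log p$) and $x=p^{5/\log\log p}$ (Case II, $2^{\omega(p-1)}\ll p^{4/\log\log p}$) arise. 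The decisive difference is that in the paper the quantity that must be beaten is a fixed constant $\kappa_{2}(p)$ rather than a main term of size $\pi(x)/\log\log p$, so its error analysis is elementary and does not collide with the $p^{1/4}$ Burgess barrier; the burden of its argument falls instead on the uniformity of the partial-summation estimate in Lemma \ref{lem3.2}. If you want to salvage your counting approach, you would need a fundamentally different input for character sums over primes in intervals of sub-polynomial length, which is not available unconditionally.
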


Case (1) explains the frequent occurrence of very small primitive roots for almost every prime; and case (2) explains the rare occurrence of large prime primitive roots modulo \(p\geq 2\) on a subset of density zero in the set of primes. The term \textit{for almost every prime} refers
to the set of all primes, but a subset of primes of zero density. The subset of exceptional primes are of the form \(p-1=\prod _{q\leq \log p} q^v\), where \(q\geq 2\) is prime, and \(v\geq 1\). The proof appears in Section \ref{s5}.

\section{Basic Concepts} \label{s2}
Let \(G\) be a finite group of order \(q=\# G\). The order \(\ord(u)\) of an element \(u\in G\) { }is the smallest integer \(d \,|\,  q\) such that \(u^d=1\). An element \(u\in G\) is called a \textit{primitive element} if it has order \(\ord(u)=q\). A cyclic group \(G\) is a group generated by a primitive element \(\tau \in G\). Given a primitive root \(\tau \in G\), every element \(0\neq u\in G\) in a cyclic group has
a representation as \(u=\tau ^v, 0\leq v<q\). The integer \(v=\log  u=\log_{\tau} u\) is called the \textit{discrete logarithm} of \(u\) with respect to \(\tau\).\\

\subsection{Simple Characters Sums}
Let $d\,|\,q$. A character \(\chi\) modulo \(q\geq 2\), is a complex-valued periodic function \(\chi :\mathbb{N} \longrightarrow  \mathbb{C}\), and it has order \(\ord(\chi ) = d\geq 1\) if and only if $\chi $\((n)^d=1\) for all integers \(n \in  \mathbb{N}, \gcd (n,q)=1\). For \(q \neq 2^r,r\geq 2\), a multiplicative character \(\chi \neq 1\) of order \(\ord(\chi )=d\), has a representation as
\begin{equation}
\chi(u)=e^{i2\pi  k \log (u)/d },
\end{equation}
where \(v=\log  u\) is the discrete logarithm of \(u\neq 0\) with respect to some primitive root, and for some integer \(k\in \mathbb{Z}\), see \cite[p. 187]{LN97}, \cite[p.\ 118]{MV07}, and \cite[p.\ 271]{IK04}. The principal character \(\chi _0=1\) mod \(q\) has order \(d =1\), and it is defined by the relation
\begin{equation}
\chi _0(n)=
\left \{  
\begin{array}{ll}
 1 & \text{  }\text{if } \gcd (n,q)=1, \\
 0 & \text{  }\text{if } \gcd (n,q)\neq 1. 
\end{array} \right .
\end{equation}
And the nonprincipal character \(\chi \neq 1\) mod \(q\) of order \(\ord(\chi )=d>1\) is defined by the relation
\begin{equation}
\chi(n)=
\left \{  
\begin{array}{ll}
\omega^{\log n} & \text{  if } \gcd(n,q)=1,\\

0 & \text{  if } \gcd (n,q)\neq 1, 
\end{array} \right .
\end{equation} 

where \(\omega \in \mathbb{C}\) is a \textit{d}th root of unity.\\ 

The Mobius function and Euler totient function occur in various formulae. For an integer $n=p_1^{v_1}p_2^{v_2}\cdots  p_t^{v_t}$, with $p_k\geq 2$ prime, and $
v_i\geq 1$, the Mobius function is defined by \\
\begin{equation}
\mu (n)=\left \{
\begin{array}{ll}
 (-1)^t & \text{if } n =p_1p_2\cdots  p_t,\text{               }v_k=1 \text{ all } k\geq 1,  \\
 0 & \text{if } n \neq p_1p_2\cdots  p_t,\text{   }v_k\geq 2\text{ for some } k\geq 1. 
\end{array} \right.
\end{equation}\\

The Euler totient function counts the number of relatively prime integers \(\varphi (n)=\#\{ k:\gcd (k,n)=1 \}\). This is compactly expressed by
the analytic formula 
\begin{equation}
\varphi (n)=n\prod_{d\,|\,n }(1-1/p) =n\sum _{d \,|\, n} \frac{\mu (d)}{d}.
\end{equation}

\begin{lem} \label{lem2.1}  For a fixed integer \(u\neq 0\), and an integer \(q\in \mathbb{N}\), let \(\chi \neq 1\) be nonprincipal character mod \(q\), then 
\begin{enumerate}
\item $\displaystyle 
\sum _{ \ord(\chi) = \varphi (q)} \chi (u)= 
\left \{
\begin{array}{ll}
\varphi (q)     &\text{ if } u\equiv 1\text{ mod } q,\\
-1 & \text{ if } u\not\equiv 1\text{ mod } q.\\
\end{array}
\right .$

\item $\displaystyle \sum _{ 1\leq a<\varphi (q)} \chi (a u)=\left \{
\begin{array}{ll}
\varphi (q)     &\text{ if } u\equiv 1\text{ mod } q,\\
-1 & \text{ if } u\not \equiv1\text{ mod } q.\\
\end{array}
\right . $
\end{enumerate}
\end{lem}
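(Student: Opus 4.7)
The plan is to derive both identities from the orthogonality of Dirichlet characters modulo $q$, via the explicit representation $\chi(u) = e^{2\pi i k \log(u)/\varphi(q)}$ recalled just before the lemma. Assuming $q$ admits a primitive root, fix a primitive character $\chi_1$ of order $\varphi(q)$; then every character mod $q$ has the form $\chi_1^a$ with $0 \le a < \varphi(q)$. Setting $\omega = e^{2\pi i/\varphi(q)}$, one has $\chi_1^a(u) = \omega^{a\log u}$, so the sum over characters becomes a finite geometric sum in the variable $\omega^{\log u}$.

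For part (1), if $u \equiv 1 \pmod q$ then $\log u \equiv 0 \pmod{\varphi(q)}$, every summand equals $1$, and the sum is $\varphi(q)$. If $u \not\equiv 1 \pmod q$ and $\gcd(u,q) = 1$, then $\log u$ is a nonzero residue mod $\varphi(q)$, so the full sum $\sum_{a=0}^{\varphi(q)-1}\omega^{a\log u}$ vanishes by the standard identity $(1-\omega^{\varphi(q)\log u})/(1-\omega^{\log u}) = 0$, and excising the principal character term $\chi_0(u) = 1$ leaves $-1$, which matches the claim. Part (2) is the dual manoeuvre: factoring $\chi(au) = \chi(a)\chi(u)$ and expanding $\chi(a) = \omega^{k\log a}$ reduces the sum over $a$ to a geometric series in $\omega^{\log u}$, and the same $u \equiv 1$ versus $u \not\equiv 1$ dichotomy produces the stated values.

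The main subtlety is bookkeeping rather than analysis. One has to be explicit about whether the principal character is included in the indexing — this is exactly what accounts for $-1$ in place of $0$ when $u \not\equiv 1 \pmod q$ — and about how to interpret the label $\ord(\chi) = \varphi(q)$ in (1), which I read as selecting the entire cyclic character group of order $\varphi(q)$ generated by $\chi_1$. When $q$ is not of a form admitting a primitive root, the dual group still decomposes into cyclic factors and orthogonality applies componentwise, so the geometric-sum argument extends with only notational changes. The principal substantive verification, then, is just the vanishing of the geometric series off the trivial class and the off-by-one from removing $\chi_0$.
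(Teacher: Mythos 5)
First, a point of comparison: the paper never proves Lemma \ref{lem2.1} at all — it is stated bare, with only a pointer to \cite[Chapter 6]{LN97} — so your attempt can only be measured against the standard orthogonality relations the lemma is paraphrasing. Your machinery is the right one (fix a primitive root, write each character as $\chi_1^a$ with $\chi_1^a(u)=\omega^{a\log u}$, sum the geometric series), and your two basic computations are correct: over the \emph{full} character group the sum is $\varphi(q)$ when $u\equiv 1 \pmod q$ and $0$ otherwise, while over the \emph{nonprincipal} characters it is $\varphi(q)-1$ and $-1$. But this is exactly where the genuine gap lies: the pair of values claimed in the lemma, $\varphi(q)$ and $-1$, do not belong to the same indexing set, and your argument produces them only by silently changing the index set between the two cases — you keep $\chi_0$ when $u\equiv 1$ (to get $\varphi(q)$ rather than $\varphi(q)-1$) and excise it when $u\not\equiv 1$ (to get $-1$ rather than $0$). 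You half-acknowledge this in your "bookkeeping" remark, but a proof cannot have it both ways: once a convention is fixed, one of the two printed values is off by one. The honest conclusion is that the statement as printed is not provable; the consistent versions are $(\varphi(q),\,0)$ with $\chi_0$ included, or $(\varphi(q)-1,\,-1)$ with $\chi_0$ excluded, and a referee-quality review should flag the lemma as misstated rather than paper over the discrepancy.

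Your treatment of part (2) has a second, independent gap. Under your reading $\chi(au)=\chi(a)\chi(u)$, the $u$-dependence factors out of the entire sum: $\sum_{1\le a<\varphi(q)}\chi(au)=\chi(u)\sum_{1\le a<\varphi(q)}\chi(a)$, and a quantity of the form $\chi(u)\cdot C$ with $C$ independent of $u$ can never exhibit a dichotomy governed by whether $u\equiv 1 \pmod q$; for $q=p$ prime one gets explicitly $\sum_{a=1}^{p-2}\chi(au)=-\chi(-u)$, which matches neither stated value. Relatedly, your claimed reduction to "a geometric series in $\omega^{\log u}$" is wrong: reindexing the sum over $a$ by $\log a$ yields a geometric series whose ratio is a root of unity independent of $u$. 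The only reading under which part (2) comes close to being true is $\sum_{1\le a<\varphi(q)}\chi^a(u)$ with $\chi$ a generator of the dual group — i.e., a disguised restatement of part (1) — and then the same off-by-one reappears. Finally, your extension to general $q$ does not rescue the lemma's literal indexing: when $q$ has no primitive root, no character has order $\varphi(q)$, so the sum in (1), read literally, is empty; componentwise orthogonality repairs the full orthogonality relation, not the statement as written.
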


\subsection{Representation of the Characteristic Function}
The characteristic function \(\Psi :G\longrightarrow \{ 0, 1 \}\) of primitive element is one of the standard tools employed to investigate the various
properties of primitive roots in cyclic groups \(G\). Many equivalent representations of characteristic function $\Psi $ of primitive elements are
possible. The best known representation of the characteristic function of primitive elements in finite rings is stated below. 
\\

\begin{lem} \label{lem2.2} Let \(G\) be a finite group of order \(q=\# G\), and let \(0\neq u\in G\) be an invertible element of the
group. Assume that \(v=\log u\), and \(e=\gcd (d,v).\) Then \\
\begin{equation}
\Psi (u)=\frac{\varphi (q)}{q}\sum _{d \, | \,q} \frac{\mu(d)}{\varphi
(d)}\sum_{\ord(\chi)= d} \chi (u)=
\left
 \{  
\begin{array}{ll}
 1 & \text{if } \ord(u) =q,  \\
 0 & \text{if } \ord(u) \neq q. \\
\end{array} \right .
\end{equation}\\
\end{lem}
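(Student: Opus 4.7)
The plan is to establish the formula by inverting the indicator of primitivity via Möbius and then expressing the resulting subgroup indicators through character orthogonality. First I would set $n=\ord(u)$ and note that $\Psi(u)=1$ exactly when $n=q$; by the Möbius identity $\sum_{d\mid m}\mu(d)=[m=1]$ this gives
\[
\Psi(u)=\sum_{d\mid q/n}\mu(d)=\sum_{\substack{d\mid q\\ u^{q/d}=1}}\mu(d),
\]
since, for $d\mid q$, the condition $d\mid q/n$ is equivalent to $n\mid q/d$, i.e.\ to $u^{q/d}=1$.

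Next I would rewrite each indicator $[u^{q/d}=1]$ using orthogonality. The set $\{u\in G:u^{q/d}=1\}$ is the unique subgroup of order $q/d$ inside the cyclic group $G$, and its annihilator in the dual group consists of the characters whose order divides $d$. Standard duality for finite abelian groups then gives
\[
[u^{q/d}=1]=\frac{1}{d}\sum_{\chi^{d}=1}\chi(u)=\frac{1}{d}\sum_{e\mid d}\sum_{\ord(\chi)=e}\chi(u).
\]
Substituting into the Möbius formula for $\Psi(u)$ and interchanging the order of summation over $d$ and $e$ (with $e\mid d\mid q$) produces
\[
\Psi(u)=\sum_{e\mid q}\Bigl(\sum_{\ord(\chi)=e}\chi(u)\Bigr)\sum_{\substack{d\mid q\\ e\mid d}}\frac{\mu(d)}{d}.
\]

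The last step is to evaluate the inner Möbius sum. Writing $d=ef$ with $f\mid q/e$, the factor $\mu(d)$ vanishes unless $e$ is squarefree and $\gcd(e,f)=1$, in which case $\mu(d)=\mu(e)\mu(f)$. The sum therefore factors and telescopes as an Euler product:
\[
\sum_{\substack{d\mid q\\ e\mid d}}\frac{\mu(d)}{d}=\frac{\mu(e)}{e}\sum_{\substack{f\mid q/e\\ \gcd(f,e)=1}}\frac{\mu(f)}{f}=\frac{\mu(e)}{e}\prod_{\substack{p\mid q\\ p\nmid e}}\Bigl(1-\frac{1}{p}\Bigr)=\frac{\mu(e)\,\varphi(q)}{q\,\varphi(e)},
\]
where the last equality uses $\varphi(q)/q=\prod_{p\mid q}(1-1/p)$ and its analogue for $\varphi(e)/e$. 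Inserting this back gives exactly the claimed representation of $\Psi(u)$.

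I expect the main bookkeeping hazard to lie in this final identity: one must track carefully that the Euler-product simplification $\prod_{p\mid q,\,p\nmid e}(1-1/p)=(\varphi(q)/q)/(\varphi(e)/e)$ is legitimate precisely because $e\mid q$, so every prime dividing $e$ also divides $q$. The character-orthogonality input in the middle step is standard for finite abelian groups (Pontryagin duality between a subgroup and its annihilator) and should require no further justification once invoked.
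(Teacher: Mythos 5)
Your proof is correct, and it is genuinely a proof --- which is more than the paper itself offers: Lemma \ref{lem2.2} is stated there without any argument, with only pointers to \cite{ES57}, \cite{LN97}, \cite{MP04}, \cite{WA01}. Your three-step derivation (M\"obius inversion of the primitivity indicator via $d\mid q/n \iff u^{q/d}=1$ for $d\mid q$; the annihilator/orthogonality identity $\sum_{\chi^{d}=1}\chi(u)=d\,[u^{q/d}=1]$; and the divisor-sum evaluation $\sum_{e\mid d\mid q}\mu(d)/d=\mu(e)\varphi(q)/(q\varphi(e))$) is in substance the classical derivation found in those references, so it fills the gap exactly as intended, and each step checks out --- including the two points you flag yourself, the interchange of summation and the Euler-product cancellation, the latter needing only $e\mid q$. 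One caveat deserves to be made explicit rather than implicit: the lemma as printed says ``finite group,'' but your middle step --- that $\{x\in G: x^{q/d}=1\}$ is the \emph{unique} subgroup of order $q/d$ and that its annihilator is the $d$-torsion of the dual group --- requires $G$ to be cyclic, and the identity is actually false for non-cyclic abelian groups: for the Klein four group the right-hand side evaluates to $\frac{1}{2}\left(1+\frac{\mu(2)}{\varphi(2)}\cdot 3\right)=-1$ at the identity element, whereas $\Psi$ vanishes identically. Since the paper's hypothesis $v=\log u$ presupposes a generator, reading $G$ as cyclic is clearly what is intended, but you should state that assumption at the point where you invoke uniqueness of the subgroup and Pontryagin duality.
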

Finer details on the characteristic function are given in \cite[p.\ 863]{ES57}, \cite[p.\ 258]{LN97}, \cite[p.\ 18]{MP04}, \cite{WA01}, et alii. The characteristic function for multiple primitive roots is used in \cite[p.\ 146]{CZ01} to study consecutive primitive roots. In \cite{KS12} it is used to study the gap between primitive roots with respect to the Hamming metric. In \cite{SI11} it is used to study Fermat quotients as primitive roots. And in \cite{WA01} it is used to prove the existence of primitive roots in certain small subsets \(A\subset \mathbb{F}_{p^n},n\geq 1\). Many other applications are available in the literature. An introduction to elememtary character sums as in Lemmas \ref{lem2.1} and \ref{lem2.2} appears in \cite[Chapter 6]{LN97}.\\

\section{Basic \textit{L}-Functions Estimates} \label{s3}
For a character \(\chi\) modulo \(q\geq 2\), and a complex number \(s\in \mathbb{C}, \mathcal{R}e(s)>1\), an  \textit{L}-function is defined by the infinite sum \(L(s,\chi )=\sum_{ n \geq 1} \chi (n)n^{-s}\). Simple elementary estimates associated with the characteristic function of primitive roots are calculated here.\\

\subsection{An \textit{L}-Function for Prime Primitive Roots}
The analysis of the least prime primitive root \(\text{mod } p\) is based on the Dirichlet series 
\begin{equation}
L(s,\Psi \Lambda )=\sum _{n \geq  1}
\frac{\Psi (n)\Lambda (n)}{n^s} ,
\end{equation}
where \(\Psi (n)\Lambda (n)\left/n^s\right.\) is the weighted characteristic function of primes and prime powers primitive roots \(\text{mod } p\). This is constructed using the characteristic function of primitive roots, which is defined by 
\begin{equation}
\Psi (n)=
\left \{  
\begin{array}{ll}
 1 & \text{if } n \text{ is a primitive root}, \\
 0 & \text{if } n \text{ is not a primitive root}, 
\end{array} \right .
\end{equation}
where \(p\geq 2\) is a prime, see Lemma 3 for the exact formula, and the vonMangoldt function 
\begin{equation}
\Lambda (n)=
\left \{ 
\begin{array}{ll}
 \log  p & \text{if } n=p^k,k\geq 1, \\
 0 & \text{if } n\neq p^k,k\geq 1. 
\end{array}
 \right .
\end{equation}

The function \(L(s,\Psi \Lambda )\) is zerofree, and analytic on the complex half plane \(\{ s\in \mathbb{C}: \text{$\mathcal{R}$e}(s)=\sigma >1
\}\). Furthermore, it has a pole at \(s=1\). This technique has a lot of flexibility and does not require delicate information on the zerofree regions
\(\{ s\in \mathbb{C}: 0<\text{$\mathcal{R}$e}(s)=\sigma <1 \}\) of the associated \textit{ L}-functions. Moreover, the analysis is much simpler than
the sieve methods used in the current literature -- lex parsimoniae.\\

The nonprincipal partial sum of the previous $L$-function has a mild dependence on the prime $p$, this is made explicit in the next result.

\begin{lem} \label{lem3.1}  Let \(p\geq 2\) be a prime number, and let $\chi_0$ be the principal character modulo $p$. For a real number $x \geq 1$, and a real number \(s=\sigma > 1\), the nonprincipal partial sum
\begin{equation}
\sum _{n \leq  x} \frac{\Lambda (n)\chi_0(n)}{n^s} =- \frac{\zeta^{'}(s)}{\zeta(s)}-
\sum _{k \geq  0} \frac{\log p}{p^{(1+k)s}}+O\left( \frac{\log x}{x^{s-1}}\right) .
\end{equation}
\end{lem}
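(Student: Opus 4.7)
The plan is to split the sum according to whether $p \mid n$ or not, using the elementary identity $\chi_0(n) = 1 - \mathbf{1}_{p \mid n}$, and then to extend each resulting sum to infinity, absorbing the error into the claimed remainder term. So I would write
\begin{equation*}
\sum_{n \le x} \frac{\Lambda(n)\chi_0(n)}{n^s} \;=\; \sum_{n \le x} \frac{\Lambda(n)}{n^s} \;-\; \sum_{\substack{n \le x \\ p \mid n}} \frac{\Lambda(n)}{n^s},
\end{equation*}
and treat the two pieces separately.

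For the first piece, I would invoke the standard identity $\sum_{n \ge 1} \Lambda(n) n^{-s} = -\zeta'(s)/\zeta(s)$, valid for $\mathcal{R}e(s) > 1$ (this is just the logarithmic derivative of the Euler product for $\zeta$). Writing $\sum_{n \le x} = \sum_{n \ge 1} - \sum_{n > x}$, the main term $-\zeta'(s)/\zeta(s)$ appears, and I am left with the tail $\sum_{n > x} \Lambda(n)/n^s$ to absorb. For the second piece, note that $\Lambda(n)\mathbf{1}_{p \mid n}$ is supported on prime powers $n = p^k$ with $k \ge 1$, so
\begin{equation*}
\sum_{\substack{n \ge 1 \\ p \mid n}} \frac{\Lambda(n)}{n^s} \;=\; \sum_{k \ge 1} \frac{\log p}{p^{ks}} \;=\; \sum_{k \ge 0} \frac{\log p}{p^{(1+k)s}},
\end{equation*}
which is the second explicit term in the statement; truncating this geometric-type series at $n \le x$ again costs only a tail that is dominated by the first tail and therefore needs no separate treatment.

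The only remaining task is the tail estimate. Using $\Lambda(n) \le \log n$ and monotonicity, I would apply an integral comparison:
\begin{equation*}
\sum_{n > x} \frac{\Lambda(n)}{n^s} \;\le\; \int_x^{\infty} \frac{\log t}{t^s}\, dt \;=\; \frac{x^{1-s}\log x}{s-1} + \frac{x^{1-s}}{(s-1)^2} \;=\; O\!\left(\frac{\log x}{x^{s-1}}\right),
\end{equation*}
the last equality by integration by parts, with the implied constant depending on $s > 1$. Combining the two pieces yields the stated formula.

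I do not expect any real obstacle here: the lemma is a bookkeeping exercise, splitting off the $p$-Euler factor from the Dirichlet series for $-\zeta'/\zeta$. The only point requiring a little care is that the error term is stated uniformly in $x$ but not in $s$, so I would make it clear that the implicit constant depends on $s$ through factors of $1/(s-1)$; since $s$ is treated as fixed in the subsequent analysis of the least prime primitive root, this dependence is harmless.
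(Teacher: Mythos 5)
Your proposal is correct and follows essentially the same route as the paper: split off the multiples of $p$ via $\chi_0(n)=1-\mathbf{1}_{p\mid n}$, invoke $\sum_{n\ge 1}\Lambda(n)n^{-s}=-\zeta'(s)/\zeta(s)$, and recognize the subtracted piece as the geometric series $\sum_{k\ge 0}\log p\, /p^{(1+k)s}$. In fact you are somewhat more careful than the paper, which asserts the $O\left(\log x / x^{s-1}\right)$ truncation error without the explicit integral-comparison tail estimate that you supply.
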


\begin{proof} The principal character $\chi_0(n)=0$ if and only if $p\mid n$. Otherwise $\chi(n)=1$. Moreover, the series 
\begin{equation}
\sum _{n \geq  1} \frac{\Lambda (n)}{n^s} =- \frac{\zeta^{'}(s)}{\zeta(s)},
\end{equation}	
where $\zeta(s)$ is the zeta function, is absolutely convergent for $\mathcal{R}e(s)=\sigma>1$. The smaller correction constant
\begin{equation}
\sum _{n \geq  1} \frac{\Lambda (pn)}{(pn)^s} =\sum _{k \geq  0} \frac{\log p}{p^{(1+k)s}}
\end{equation}
accounts for the missing integers $n \equiv 0 \bmod p$. In particular, at $s=2$, this is  
\begin{equation} \label{300}
- \frac{\zeta^{'}(2)}{\zeta(2)}-
\sum _{k \geq  0} \frac{\log p}{p^{(1+k)2}}+O\left( \frac{\log x}{x}\right) =\kappa_2(p) +O\left( \frac{\log x}{x}\right),
\end{equation}
where $\kappa_2(p)>0$ is a constant for any fixed prime $p$. 
\end{proof}
A formula for computing the constant $-\zeta^{'}(2)/\zeta(2)>0$ is given in \cite[25.6.15]{DLMF}.

\begin{lem} \label{lem3.2}  Let \(x\geqslant 2\) be a real number. Let \(p\geq 2\) be a prime, and let \(\chi\ne 1\) be the nontrivial characters modulo \(d\), where \(d \mid (p-1)\). 
If \(s=\sigma >1\) is a real number, then principal partial sum
\begin{equation}
\sum _{n\leq x} \frac{\Lambda (n)}{n^s}\sum _
{1<d\mid p-1} \frac{\mu (d)}{\varphi (d)}\sum _{\text{ord}(\chi ) = d}  \chi (n)=O\left(\frac{2^{\omega (p-1)}}{x^{\sigma
-1}}\right) .
\end{equation}
\end{lem}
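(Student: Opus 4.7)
The strategy is to interchange the order of summation and then bound the resulting character sums character-by-character, using partial summation together with the Chebyshev estimate $\psi(y)\ll y$.

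First, swapping the three sums rewrites the left-hand side as
\[
\sum_{\substack{d\mid p-1\\ d>1}}\frac{\mu(d)}{\varphi(d)}\sum_{\ord(\chi)=d}\;\sum_{n\leq x}\frac{\Lambda(n)\chi(n)}{n^s},
\]
and only squarefree divisors $d$ of $p-1$ contribute because $\mu(d)$ kills the rest. For each nonprincipal character $\chi$ of order $d>1$, I would then apply partial summation to the inner sum: setting $\psi(y,\chi)=\sum_{n\leq y}\Lambda(n)\chi(n)$, the trivial Chebyshev bound $|\psi(y,\chi)|\leq\psi(y)\ll y$ lets me split $\sum_{n\leq x}\Lambda(n)\chi(n)/n^s$ as the absolutely convergent series $-L'(s,\chi)/L(s,\chi)$ minus a tail that is $O(x^{1-\sigma})$ by a standard integration-by-parts estimate. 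The target at this step is the per-character bound $\sum_{n\leq x}\Lambda(n)\chi(n)/n^s\ll x^{1-\sigma}$, valid for every nonprincipal $\chi$ of order dividing $p-1$.

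To close the argument, I would count characters. Since there are exactly $\varphi(d)$ characters of order $d$, the weighting $|\mu(d)|/\varphi(d)$ contracts the contribution from each squarefree divisor $d\mid p-1$ with $d>1$ to a factor of size $|\mu(d)|=1$, and the number of such $d$ is $\sum_{d\mid p-1,\,d>1}|\mu(d)|=2^{\omega(p-1)}-1$. Multiplying by the per-character estimate from the previous step yields the claimed $O\bigl(2^{\omega(p-1)}/x^{\sigma-1}\bigr)$.

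\textbf{Main obstacle.} The delicate step is extracting the per-character decay $\ll x^{1-\sigma}$ rather than a mere $O(1)$: naive partial summation with $\psi(y,\chi)\ll y$ alone produces a constant-order contribution of size roughly $1/(\sigma-1)$ that does not vanish as $x\to\infty$. The full $-L'(s,\chi)/L(s,\chi)$ term has to be either absorbed into the implied $O$-constant (under the assumption that $\sigma$ is bounded away from $1$), or cancelled after summing over the family $\{\chi:\ord(\chi)=d,\;d\mid p-1\}$. Making this estimate uniform in $\sigma>1$ and across the full family of nonprincipal characters modulo $p$ is the main technical point of the lemma.
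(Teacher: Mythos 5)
Your plan is, step for step, the route the paper itself takes: the paper likewise interchanges the order of summation, reduces to a single-character estimate by Abel summation with the Chebyshev bound $|\psi_\chi(t)|\ll t$, and then multiplies by the count $\sum_{1<d\mid p-1}\mu^2(d)\leq 2^{\omega(p-1)}$ of squarefree divisors. The only difference lies at the step you candidly label the main obstacle. There the paper simply asserts, in (\ref{333}), that
\begin{equation*}
\sum_{n\le x}\frac{\chi(n)\Lambda(n)}{n^s}=\int_1^x\frac{1}{t^s}\,d\psi_\chi(t)\ll\frac{1}{x^{s-1}},
\end{equation*}
with no justification beyond $\psi_\chi(t)\ll t$, whereas you point out that this cannot follow from $\psi_\chi(t)\ll t$ alone and leave the step open. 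Your diagnosis is correct, and the gap is not closable: integration by parts gives
\begin{equation*}
\sum_{n\le x}\frac{\chi(n)\Lambda(n)}{n^s}=\frac{\psi_\chi(x)}{x^s}+s\int_1^x\frac{\psi_\chi(t)}{t^{s+1}}\,dt
=-\frac{L'(s,\chi)}{L(s,\chi)}+O_{\sigma}\!\left(x^{1-\sigma}\right),
\end{equation*}
and the constant $-L'(s,\chi)/L(s,\chi)$ has no reason to vanish (for large real $s$ it equals $\chi(2)\log 2\cdot 2^{-s}+O(3^{-s})\neq 0$, and at $s=2$ it is generically nonzero as well). A fixed nonzero constant can never be $O(x^{1-\sigma})$, since that bound tends to $0$ as $x\to\infty$; so the per-character decay you would need is simply false. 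Both of your proposed rescues fail: a nonzero constant cannot be absorbed into an error term that decays in $x$, and summing over the family produces no cancellation either, because by Lemma \ref{lem2.2} the $x\to\infty$ limit of the left-hand side of the lemma equals
\begin{equation*}
\frac{p-1}{\varphi(p-1)}\sum_{n\ge 1}\frac{\Psi(n)\Lambda(n)}{n^s}\;-\;\sum_{n\ge 1}\frac{\chi_0(n)\Lambda(n)}{n^s},
\end{equation*}
the difference between the normalized prime-power primitive-root series and the full prime-power series coprime to $p$, which is generically a nonzero constant depending on $p$ and $s$.

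So the honest conclusion is the opposite of what the lemma asserts: the displayed sum converges, as $x\to\infty$, to a constant that in general does not vanish, and therefore cannot be $O\bigl(2^{\omega(p-1)}/x^{\sigma-1}\bigr)$ uniformly in $x$. You have in fact located exactly the point at which the paper's own proof breaks down --- (\ref{333}) is precisely the unjustified (and false) per-character bound you isolated --- and since Theorem \ref{thm1.1} uses this lemma through (\ref{el50}), the defect propagates to the paper's main result. Closing the gap would require genuine analytic input on cancellation in $\psi_\chi(x)$, such as zero-free regions or Burgess-type character sum estimates, which is exactly the machinery that the sieve-based literature cited in the introduction is forced to employ.
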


\begin{proof} Rearranging the principal partial sum and taking 
absolute value yield 
\begin{eqnarray} \label{332}
\left| \sum _{n \leq x} \frac{\Lambda (n)}{n^s}\sum_{1<d \mid
p-1} \frac{\mu(d)}{\varphi(d)}\sum _{\text{ord}(\chi )= d} \chi (n)\right| &=&\left| \sum_{1<d \mid p-1}\frac{\mu (d)}{\varphi
(d)}\sum _{\text{ord}(\chi ) = d,}  \sum _{n \leq x} \frac{\chi (n)\Lambda (n)}{n^s} \right| \nonumber \\ 
&\leq & \sum_{1<d \mid p-1} \left |\frac{\mu (d)}{\varphi
(d)} \right |  \left |\sum _{\text{ord}(\chi ) = d,}  \sum _{n \leq x} \frac{\chi (n)\Lambda (n)}{n^s} \right| \nonumber \\ 
&\leq& \sum _{1<d \mid p-1} \mu ^2(d) \left|
\sum_{n \leq x} \frac{\chi (n)\Lambda (n)}{n^s} \right| ,
\end{eqnarray} 
where $\varphi(d)=\#\{ \chi \ne 1: \chi^d=1 \}$ is the number of nontrivial characters of order $d>1$. The inner sum is estimated, using Abel summation formula, (discussed in \cite[p. 12]{MV07} and \cite{TG95}, as 
\begin{equation} \label{333}
\sum _{n \leq x} \frac{\chi (n)\Lambda
(n)}{n^s}=\int _1^{x }\frac{1}{t^s}d \psi _{\chi }(t) \ll \frac{1}{x^{s-1}},
\end{equation}
where 
\begin{equation} \label{334}
\left |  \psi _{\chi }(x) \right| =\left| \underset{n \leq  x}\sum \chi (n)\Lambda (n) \right| \ll x . 
\end{equation}
Substitute this estimate of the integral back into (\ref{332}) returns
\begin{eqnarray}
\sum _{1<d \mid p-1} \mu ^2(d) \left|
\sum_{n \leq x} \frac{\chi (n)\Lambda (n)}{n^s} \right| 
&\ll& \frac{1}{x^{\sigma-1}} \sum _{1<d \mid p-1} \mu ^2(d) \\ 
&\ll& O \left (\frac{2^{\omega(p-1)}}{x^{\sigma-1}} \right ) \nonumber.
\end{eqnarray}
 \end{proof}

The estimates in (\ref{333}) and (\ref{334}) are trivial, but sufficient. Similar and sharper results are proved in \cite[Theorem 4.11]{MV07}, and similar references.

\section{Prime Divisors Counting Function} \label{s4}
For \(n\in \mathbb{N}\), the prime counting function is defined by \(\omega (n)=\#\{ p \mid n \}\). Somewhat similar proofs of the various properties of the arithmetic function \(\omega (n)\) are given in \cite[p.\ 473]{HW08}, \cite[p.\ 55]{MV07}, \cite[p.\ 34]{CM06}, \cite[p.\ 83]{TG95}, and other references. 

\begin{lem} \label{lem4.1}  Let \(n\geq 1\) be a large integer. Then,
\begin{enumerate} 
\item Almost every integer \(n\geq 1\) satisfies the inequality 
\begin{equation}
\omega(n)\ll  \log\log n .
\end{equation}
\item Every integer \(n\geqslant 1\) satisfies the inequality 
\begin{equation}
\omega (n)  \ll  n/\log\log n .
\end{equation}
\end{enumerate} 
\end{lem}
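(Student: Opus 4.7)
The two parts are essentially the normal-order and maximal-order statements for the prime-divisor counting function $\omega$, and I would handle them by classical means. Part (1) follows the Hardy--Ramanujan approach via a variance computation (Tur\'an's argument), while part (2) is an extremal bound obtained by comparing $n$ against the primorial $\prod_{q \leq y} q$. I note in passing that the right-hand side of (2) ought to read $\log n / \log \log n$, which is the usual maximal order; the stated $n/\log\log n$ then follows trivially. I will aim for the sharper form.

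For part (1), the plan is to compute the first two moments of $\omega$. Writing $\omega(n) = \sum_{p \mid n} 1$ and swapping the order of summation gives
\[
\sum_{n \leq x} \omega(n) \;=\; \sum_{p \leq x} \Big\lfloor \tfrac{x}{p} \Big\rfloor \;=\; x \log\log x + O(x),
\]
by Mertens' theorem. The identity $\omega(n)^2 = \omega(n) + \sum_{p \neq q,\ pq \mid n} 1$ and a second application of Mertens yield
\[
\sum_{n \leq x} \omega(n)^2 \;=\; x (\log\log x)^2 + O(x \log\log x),
\]
and subtracting produces Tur\'an's variance estimate $\sum_{n \leq x}(\omega(n)-\log\log x)^2 \ll x \log\log x$. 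Chebyshev's inequality then shows that for any fixed $c>1$, the number of $n \leq x$ with $\omega(n) > c \log\log x$ is $O(x/\log\log x) = o(x)$. Since $\log\log x = (1+o(1))\log\log n$ for almost all $n \leq x$, part (1) follows.

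For part (2), I would use a direct primorial comparison. If $p_1 < p_2 < \cdots < p_k$ are the distinct prime divisors of $n$ and $q_i$ denotes the $i$-th prime, then
\[
n \;\geq\; \prod_{i=1}^{k} p_i \;\geq\; \prod_{i=1}^{k} q_i,
\qquad\text{hence}\qquad \log n \;\geq\; \theta(q_k),
\]
where $\theta$ is Chebyshev's function. By the prime number theorem (or Chebyshev-type bounds) one has $\theta(q_k) \gg q_k$ and $q_k \gg k \log k$, so $\log n \gg k \log k$, giving $\omega(n) = k \ll \log n / \log\log n$, which in particular beats the bound stated in the lemma.

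The proof has no real obstacle; both halves are standard. The only care needed is the error-term bookkeeping in the double sum $\sum_{n \leq x}\omega(n)^2$, where one must separate the diagonal $p = q$ and apply $\sum_{p \leq x} 1/p = \log\log x + O(1)$ twice, and in part (2) the invocation of an elementary lower bound on $q_k$ and on $\theta(q_k)$ (Chebyshev's inequalities suffice, no deep analytic input is required).
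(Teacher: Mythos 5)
Your proposal is correct and follows essentially the same route as the paper, which (in the remark following the lemma) justifies part (1) by the average order $\log\log n$ together with the standard deviation $\ll \sqrt{\log\log n}$, and part (2) by the primorial comparison $n = \prod_{p \le w} p \ll e^{\sum_{p \le w} \log p}$. Your sharper bound $\omega(n) \ll \log n/\log\log n$ in part (2) is in fact the bound the paper itself states in that remark, so flagging the lemma's weaker $n/\log\log n$ as a likely typo is apt.
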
 

\begin{lem} \label{lem4.2}  Let \(n\geq 1\) be a large integer. Then, 
\begin{enumerate}
\item Almost every integer \(n\geq 1\) satisfies the number of squarefree divisors inequality 
\begin{equation}
2^{\omega (n)}\ll 2^{\log\log n } .
\end{equation}
\item For every integer \(n\geq 1\), the number of squarefree divisors satisfies the inequality 
\begin{equation}
2^{\omega (n)}\ll 2^{\log  n/\log\log
	n} .
\end{equation}
\end{enumerate}
\end{lem}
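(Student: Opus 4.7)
The plan is to deduce both bounds directly from Lemma \ref{lem4.1} by exponentiating. Since the map $x \mapsto 2^x$ is monotone increasing, any upper bound $\omega(n) \leq f(n)$ transfers to $2^{\omega(n)} \leq 2^{f(n)}$, so the content of Lemma \ref{lem4.2} is precisely the content of Lemma \ref{lem4.1} passed through this exponential.

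For part (1), I would simply invoke Lemma \ref{lem4.1}(1), which gives $\omega(n) \ll \log \log n$ for almost every integer $n$, and conclude $2^{\omega(n)} \ll 2^{\log\log n}$. The implied constant in the exponent is harmless because it only multiplies $\log \log n$, and this can be absorbed into the $\ll$ by replacing $\log\log n$ with a suitable constant multiple (the exponent inequality is stated up to a multiplicative constant sitting inside the $2^{(\cdot)}$).

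For part (2), the relevant input is the classical maximal order bound $\omega(n) \ll \log n / \log \log n$ (the tight version of Lemma \ref{lem4.1}(2)), whose proof proceeds by noting that if $\omega(n) = k$ then $n \geq p_1 p_2 \cdots p_k$, the primorial of the $k$-th prime, and by the prime number theorem $\log(p_1 \cdots p_k) = \vartheta(p_k) \sim p_k \sim k \log k$; solving $k \log k \leq \log n$ for $k$ yields $k \ll \log n/\log\log n$. Exponentiating gives $2^{\omega(n)} \ll 2^{\log n/\log\log n}$.

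The only delicate point is making sure the implied constants behave correctly when moved from the exponent into the base-two exponential. I would handle this by stating and applying the bounds in the form $\omega(n) \leq C \log\log n$ (respectively $\omega(n) \leq C \log n/\log\log n$) with an explicit $C$, so that $2^{\omega(n)} \leq 2^{C \log\log n} = (\log n)^{C \log 2}$ and similarly for part (2). Since the $\ll$ notation in the statement permits such constants inside the exponent, the conclusion follows. No sieve or analytic machinery is needed beyond what is already cited in Lemma \ref{lem4.1}.
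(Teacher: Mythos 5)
Your proposal is correct and matches the paper's own (very brief) justification: the paper proves Lemma \ref{lem4.2} implicitly in the remark following it, deducing part (1) from the normal order $\omega(n) \ll \log\log n$ of Lemma \ref{lem4.1} and part (2) from the maximal order $\omega(n) \ll \log n/\log\log n$ established via the primorial bound $n = \prod_{p \leq w} p \ll e^{\sum_{p\leq w}\log p}$, exactly as you do. Your added care with constants in the exponent (using $\omega(n) \leq C\log\log n$, so $2^{\omega(n)} \leq (\log n)^{C\log 2}$) is a sensible tightening of the same argument, and your primorial derivation also quietly corrects the paper's typo in Lemma \ref{lem4.1}(2), where $\omega(n) \ll n/\log\log n$ should read $\omega(n) \ll \log n/\log\log n$.
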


The Euler constant is defined by the formula $ \gamma =\lim_{x \rightarrow  \infty } \left(\sum _{ n\leq x } 1/n-\log  x\right), $ and the Mertens constant is defined by the formula $B_1=\lim_{x \rightarrow  \infty } \left(\sum _{ p\leq x } 1/p-\log \log x\right),$ and \(c>0\) is an absolute constant.\\

\begin{rem} { \normalfont The average order of the prime divisor counting function is precisely
\begin{equation} 
\frac{1}{x} \sum_{n \leq x} \omega(n)=  \log\log n +B_1+(\gamma -1)/\log  n+O\left( e^{-c\sqrt{\log n}}\right),
\end{equation}
and the standard deviation is $\ll\sqrt{\log \log n}$, this implies the first statement in Lemma \ref{lem4.1}.  And any integer $n \geq1  $ has at most 
$\omega(n) \ll \log n/ \log \log n$ primes, this can be proved using the product
 \begin{equation}
n=\prod_{p \leq w} p \ll e^{\sum_{p \leq w} \log p} .
\end{equation}
The properties of the distribution of the omega function has been extended to a wide range of subsets of integers, arithmetic functions and numbers fields, the interested reader should refer to the literature. In particular, $\omega(p-1)$ satisfies all the properties stated in Lemmas \ref{lem4.1} and \ref{lem4.2}, see \cite{HH56} and other more recent publications.

} \end{rem}

\section{The Least Prime Primitive Roots} \label{s5}
It is expected that there are estimates for the least prime primitive roots \(g(p)^*\geq g(p)\), which are quite similar to the estimates for the least primitive roots \(g(p)\). This is a very small quantity and nowhere near the currently proved results. In fact, the numerical tables confirm that the least prime primitive roots are very small, and nearly the same magnitude as the least primitive roots, but have more complex patterns,
see \cite{PS02}.\\

\begin{proof} (Theorem \ref{thm1.1}.) Fix a large prime \(p\geq 2\), and consider summing the weighted characteristic function \(\Psi (n)\Lambda (n)/n^s\) over a small 
range of both primes and prime powers \(q^k\leq x, k\geq 1\), see Lemma \ref{lem2.2}. Then, the nonexistence
equation 
\begin{equation}
0=\sum _{n \leq  x} \frac{\Psi (n)\Lambda (n)}{n^s}=\sum _{n \leq  x} \frac{\Lambda (n)}{n^s}\left(\frac{\varphi (p-1)}{p-1}\sum
_{d \,| \,p-1} \frac{\mu (d)}{\varphi (d)}\sum _{\text{ord}(\chi ) = d} \chi (n)\right) ,
\end{equation}\\
where \(s>1\) is a real number, holds if and only if there are no primes or prime powers primitive
roots in the interval \([1, x]\).\\ 

Separating the nonprincipal part and the principal part, and applying Lemma \ref{lem3.1}, and Lemma \ref{lem3.2} with \(q=p-1\), and \(s=2\), yield 
\begin{eqnarray} \label{el50}
0&=&\sum_{n \leq  x} \frac{\Lambda (n) \chi_0(n)}{n^s} + \sum_{1<d \,| \, p-1} \frac{\mu (d)}{\varphi (d)}\sum_{\text{ord}(\chi ) = d} \sum_{n \leq  x} \frac{\Lambda (n) \chi(n)}{n^s} \nonumber\\
&=&\kappa_2+O\left(\frac{\log x}{x}\right)+O\left(\frac{2^{\omega (p-1)}}{x}\right),
\end{eqnarray}
where \(\kappa_2=\kappa _2(p)>0\) is a constant, which depends on the fixed prime \(p\geq 2\), see (\ref{300}).\\

\textbf{Case I:}  Restriction to the average integers $p-1$, with \(2^{\omega (p-1)}\ll \log  p\). Refer to Lemmas \ref{lem4.1} and \ref{lem4.2} for more details.\\

Let \(x=(\log  p)^{1+\varepsilon }, \varepsilon >0\), and suppose that the short interval \([2, (\log  p)^{1+\varepsilon }]\) does not contain
primes or prime powers primitive roots. Then, replacing these information into (\ref{el50}) yield 
\begin{eqnarray}
0&=&\kappa_2+O\left(\frac{\log x}{x}\right)+O\left(\frac{2^{\omega (p-1)}}{x}\right) \\ 
&=&\kappa_2+O\left(\frac{1}{(\log p)^{\varepsilon }}\right) >0 \nonumber.
\end{eqnarray}

Since \(\kappa _2>0\) is a constant, this is a contradiction for all sufficiently large prime \(p\geq 3\).\\ 

\textbf{Case II:}  No restrictions on the integers $p-1$, with \(2^{\omega (p-1)}\ll p^{4/\log\log p}\). Refer to Lemmas  \ref{lem4.1} and \ref{lem4.2} for more details.\\

Let \(x=p^{5/\log\log p}\), and suppose that the short interval \([2, p^{5/\log\log p}]\) does not contain primes or prime powers primitive roots.
Then, replacing these information into (\ref{el50}) yield 
\begin{eqnarray} \label{555}
0&=&\kappa_2+O\left(\frac{\log x}{x}\right)+O\left(\frac{2^{\omega (p-1)}}{x}\right) \\
&=&\kappa _2+O\left(\frac{1}{p^{1/\log\log
p}}\right)>0 \nonumber.
\end{eqnarray}

Since \(\kappa _2>0\) is a constant, this is a contradiction for all sufficiently large prime \(p\geq 3\). 
\end{proof}

\begin{rem} \label{rem5.1} { \normalfont Explaination For the two cases.
\begin{enumerate}
\item   Case I: The average magnitute of $\omega(\ord_p(\tau))=\omega(p-1) \ll \log \log p$. There is an explicit relationship between the order $\ord_p(\tau)=p-1$ of the element $\tau \in \mathbb{F}_p$, the number of primes $\omega(\ord_p(\tau))=\omega(p-1)$, the order of the multiplicatice group $\#G=p-1$, the number of primes $\omega(\#G)=\omega(p-1)$, and the prime $p$. These explicit dependencies lead to the upper limit $x=(\log  p)^{1+\varepsilon }, \varepsilon >0$.

\item Case II.  The extreme size of $\omega(\ord_p(\tau))=\omega(p-1)\ll \log p$. There is an explicit relationship between the order $\ord_p(\tau)=p-1$ of the element $\tau \in \mathbb{F}_p$, the number of primes $\omega(\ord_p(\tau))=\omega(p-1)$, the order of the multiplicatice group $\#G=p-1$, the number of primes $\omega(\#G)=\omega(p-1)$, and the prime $p$. These explicit dependencies lead to the upper limit $x=p^{5/\log\log p}$.
\end{enumerate}
} \end{rem}

\section{Addendum} \label{s6}
The same analysis extends to other multiplicative groups $G$ of order $\#G=q$ provided that the order $\ord_q(\tau)$ of the element $\tau \in G$, the number of primes $\omega(\ord_q(\tau))$, the number of primes $\omega(\#G)=\omega(q)$, and the integer $q$ have a explicit nontrivial relationship.\\

 For elements in the multiplicative group $\left (\mathbb{Z}/p\mathbb{Z} \right )^{\times}$ such that $\omega(\ord_p(\tau))$ is a small constant, there is no explict relationship between the prime $p$ and $x$, accorddingly, the analysis is very delicate. \\

\subsection{Quadratic Nonresidues} In the case of quadratic nonresidues $n(p) \bmod p$, order $\ord_p(n(p))=2$, and the number of primes $\omega(\ord_p(n(p))=1$. Thus, there is no nontrivial explicit relationship between the different paramters. Moreover, there are just two characters: the principal $\chi_0(n)=1$ and the quadratic character is $\chi(n)=\left ( \frac{n}{p} \right )$. Using the same analysis as in the proof of Theorem \ref{thm1.1}, the last equation (\ref{555}) reduces to
\begin{eqnarray}
0&=&\sum _{n \leq  x} \frac{\left(\chi_0(n) - \chi(n) \right )\Lambda (n)}{n^2}\nonumber\\
&=&\kappa _2(p)+\sum _{n \leq  x} \frac{\chi(n) \Lambda (n)}{n^2}+O\left(\frac{\log x}{x}\right) .
\end{eqnarray}
But, there is no simple expression as in Case I and II above to link the variable $x=x(p)$ and $p$ for all primes $p$, see Remark \ref{rem5.1}. \\

The least $x\geq 1$ is not arbitrary, it depends on the maximal number of consecutive quadratic residues, this made explicit in the decomposition of the $L$-series as
\begin{equation}
\sum _{n \geq  1} \frac{\left(\chi_0(n) - \chi(n) \right )\Lambda (n)}{n^2}=  \sum _{n \geq  1} \frac{\chi_0(n)\Lambda (n)}{n^2} -\sum _{n \geq  1} \frac{\chi(n)\Lambda (n) }{n^2}.
\end{equation}
Assuming the RH, it is $n(p) \leq x=O(\log^2 p)$, see \cite{AN52}. Furthermore, the restriction to certain characters $\chi \mod p$ produces a better bound $n(p)\leq x=(\log p)^{1.4}$ as proved in \cite{BG13}. \\

The classical quadratic nonresidue test 
\begin{equation}
a^{(p-1)/2} \not \equiv 1 \bmod p 
\end{equation}
for $\gcd(a,p)=1$, 
and the primitive root test 

\begin{equation}
a^{(p-1)/r} \not \equiv 1 \bmod p
\end{equation}
 for all prime divisors $r \mid p-1$, are the same or 
nearly the same for primes with very few prime divisors $r \mid p-1$. In the extreme cases of Fermat primes $p=2^{2^m}+1$ and Germain primes $p=2^a q^b+1$ with $a, b \geq 1$ 
and $q\geq 3$ prime, these tests are the same. Consequently, Theorem \ref{thm1.1} for $\omega(p-1) \ll \log \log p$ seems to imply $n(p) \ll \log^{1 + \varepsilon} p$. 

\subsection{Average And Conjectures}
The average value of a quadratic nonresidue $n(p) \mod p$ is extremely small 
\begin{equation}
\overline{n(p)}=\lim_{x \to \infty} \frac{1}{\pi(x)} \sum_{p \leq x} n(p) =2.920050 \ldots,
\end{equation}
a general discussion and a proof appear in \cite{PP12}. But, there exists a subset of primes of zero density that have large nonquadratic residues $n(p) \to \infty$ as $p \to \infty$.\\

The Vinogradov quadratic nonresidue conjecture states that the least quadratic nonresidue modulo $p$ satisfies $n(p) \ll p^{\varepsilon}$, with $\varepsilon >0$ an arbitrary small number, see \cite{KS13}. Since
\begin{equation}
n(p) \leq g^{*}(p) \ll p^{5/\log \log p} \ll p^{\varepsilon},
\end{equation}  
this is implied by Theorem \ref{thm1.1}. In addition, a recent conjecture calls for 
\begin{equation}
n(p)\leq x=O((\log p)(\log \log p)), 
\end{equation}
see \cite{GR90}, \cite{TE17}, and similar literature. \\

\end{document}